\newtheorem{theorem}{Theorem}
\newtheorem{proposition}{Proposition}
\newtheorem{lemma}{Lemma}
\newtheorem{corollary}{Corollary}
\theoremstyle{definition}
\newtheorem{def_on}{Definition}
\theoremstyle{remark}
\newtheorem{remark}{Remark}
\title{C*-algebra positive element invertibility criteria in terms of $L_1$-norms equivalence}
\author{Novikov Andrei}
\begin{document}
\Large
\maketitle

\begin{abstract}
We prove that the $L_1$-norms associated with a positive element $a$ of a unital C*-algebra are equivalent to the norm of C*-algebra if and only if $a$ is invertible.
\end{abstract}

keywords: C*-algebra, von Neumann algebra, invertibility, $L_1$-norm, noncommutative,  positive element

\section*{Introduction}

In \cite{Nov2017} we have described the construction of $L_1$-norms and $L_1$ type spaces assotiated with positive operator affiliated with von Neumann algebra. Which lead to some results for the measures on orthoideals \cite{Nov16}. As a side-effect of this work we obtained the caracterization of positive central elements by inequalities \cite{Nov15}.

In the other branch on research \cite{NovEsk2017, Esk18} we have studied different compositions of inductive and projective limits of Banach spaces of measurable functions with order unities, which essentially questioned us on what is the necessary and sufficient conditions for $L_1$-norms for operators $a^n$ and $a^{n+1}$ to be equivalent. This article gives us the answer.

\section{Definitions and Notation}
Throughout this paper we adhere the following notation: $\mathcal{A}$ denotes a C*-algebra,
$\mathcal{A}^\mathrm{sa}$ denotes self-adjoint part of $\mathcal{A}$,
$\mathcal{A}^*$ is the space of all continuous linear functionals on $\mathcal{A}$,  $\mathcal{A}_\mathrm{h}^*$ is its Hermitian part,
$\mathcal{A}^+$ denotes the positive cone of $\mathcal{A}$, $\mathcal{A}^*_+$ denotes the cone of the positive continuous linear functionals on $\mathcal{A}$. 

\begin{def_on}
For a positive element $a\in \mathcal{A}$ we consider seminorm $r_a$ and a norm $\|\cdot\|^a$ defined by the equations
$$r_a(f)=\inf\{f_1(a)+f_2(a)\ | $$
$$| \ f=f_1-f_2\ (f_1,\ f_2\in \mathcal{A}^*_+)\}\ (f\in \mathcal{A}^*_\mathrm{h}).$$
\end{def_on}

\begin{def_on}
On $\mathcal{M}_*^h$ we consider the seminorm
$$r_a(\varphi)=\inf\{\varphi_1(a)+\varphi_2(a)\ |$$
$$|\ \varphi=\varphi_1-\varphi_2\ (\varphi_1,\ \varphi_2\in \mathcal{M}_*^+)\} (\varphi\in \mathcal{M}_*^\mathrm{h}).$$
\end{def_on}

\section{Preliminaries}
For a C*-algebra $\mathcal{A}$ there always exist the universal enveloping von Neumann algebra, which will be deonted by $\mathcal{N}$~\cite[III.2]{Takesakii}.
By $\pi$ and $\pi'$ we denote the morphisms $\pi: \mathcal{A}\mapsto \mathcal{N}$
and $\pi': \mathcal{A}^* \mapsto \mathcal{N}_*$.

\begin{lemma}\label{prop3}
For $a\in\mathcal{A}^+$ the equality 
$$r_{\pi(a)}(\pi'(f))=r_a(f)\ (f\in \mathcal{A}^*_\mathrm{h})$$
holds.
\end{lemma}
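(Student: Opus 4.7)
The plan is to prove both inequalities $r_{\pi(a)}(\pi'(f)) \leq r_a(f)$ and $r_a(f) \leq r_{\pi(a)}(\pi'(f))$ by exploiting the universal property of $\mathcal{N}$, specifically that $\pi'$ is an isometric order-isomorphism between $\mathcal{A}^*$ and $\mathcal{N}_*$ that restricts to a bijection $\mathcal{A}^*_+ \to \mathcal{N}_*^+$, and that $\pi'(g)(\pi(x)) = g(x)$ for all $g \in \mathcal{A}^*$ and $x \in \mathcal{A}$.

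For the easy direction $r_{\pi(a)}(\pi'(f)) \leq r_a(f)$, I would take an arbitrary admissible decomposition $f = f_1 - f_2$ with $f_i \in \mathcal{A}^*_+$. Applying $\pi'$ gives $\pi'(f) = \pi'(f_1) - \pi'(f_2)$ with $\pi'(f_i) \in \mathcal{N}_*^+$, so this is an admissible decomposition in the sense of the second definition. Using the compatibility $\pi'(f_i)(\pi(a)) = f_i(a)$, its value is $f_1(a) + f_2(a)$. Taking infimum over all such decompositions on the left yields the inequality.

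For the reverse direction, I would start with an admissible decomposition $\pi'(f) = \varphi_1 - \varphi_2$ with $\varphi_i \in \mathcal{N}_*^+$. The key point is surjectivity of $\pi'$ as a map between positive cones: there exist $f_1, f_2 \in \mathcal{A}^*_+$ with $\pi'(f_i) = \varphi_i$. Injectivity of $\pi'$ on $\mathcal{A}^*$ then forces $f = f_1 - f_2$. Again using $\varphi_i(\pi(a)) = \pi'(f_i)(\pi(a)) = f_i(a)$, the value of this decomposition equals $f_1(a)+f_2(a) \geq r_a(f)$, and taking infimum gives the desired inequality.

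The only real obstacle is making sure the order-isomorphism properties of $\pi'$ are invoked correctly; these are standard consequences of the universal property cited from \cite{Takesakii}, so both the pushforward of positive functionals from $\mathcal{A}^*$ into $\mathcal{N}_*^+$ and the lift of normal positive functionals back to $\mathcal{A}^*_+$ are at our disposal. Once these are in place, the argument is a direct matching of infima via the pair of mutually inverse correspondences between admissible decompositions.
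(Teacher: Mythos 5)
Your argument is correct and is essentially the paper's own proof: the paper likewise uses that $\pi'$ is an order-preserving isometric isomorphism of $\mathcal{A}^*$ onto $\mathcal{N}_*$ together with the compatibility $\varphi(\pi(x))=((\pi')^{-1}(\varphi))(x)$ to identify the two sets of admissible decompositions and their values, writing the two inequalities you prove as a single chain of equalities of infima. No substantive difference.
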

\begin{proof}
Let $f\in \mathcal{A}^*_\mathrm{h}$. By definition $$r_{\pi(a)}(\pi'(f))=
\inf\{\varphi_1(\pi(a))+\varphi_2(\pi(a))\ |$$
$$|\ \pi'(f)=\varphi_1-\varphi_2\ (\varphi_1,\ \varphi_2\in \mathcal{N}_*^+)\}.$$
Since for any $\varphi\in \mathcal{N}_*$ and any $x\in \mathcal{A}$ the equality  $$\varphi(\pi(x))=((\pi')^{-1}(\varphi))(x)$$ holds, and since $\pi'$ is the isometrical isomorphism of  $\mathcal{A}^*$ onto $\mathcal{N}_*$ that preserves the order, it follows that
$$r_{\pi(a)}(\pi'(f))=\inf\{\varphi_1(\pi(a))+\varphi_2(\pi(a))\ |$$
$$|\ \pi'(f)=\varphi_1-\varphi_2\ (\varphi_1,\ \varphi_2\in \mathcal{N}_*^+)\}=$$
$$=\inf\{((\pi')^{-1}(\varphi_1))(a)+((\pi')^{-1}(\varphi_2))(a)\ |$$
$$| \ f=((\pi')^{-1}(\varphi_1))-((\pi')^{-1}(\varphi_2))\ (\varphi_1,\ \varphi_2\in \mathcal{N}_*^+)\}=$$
$$=\inf\{f_1(a)+f_2(a)\ |\ f=f_1-f_2\ (f_1,\ f_2\in \mathcal{A}^*_+)\}=r_a(f).$$
\end{proof}

\begin{theorem}[Proposition~3 \cite{SkvTik98}]
For $a\in \mathcal{M}^+$ and $\varphi\in \mathcal{M}_*^\mathrm{h}$ the equality
$$r_a(\varphi)=\|a^\frac{1}{2}\varphi a^\frac{1}{2}\|$$
holds.
\end{theorem}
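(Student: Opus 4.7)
My plan is to prove the two inequalities separately. The inequality $\|a^{1/2}\varphi a^{1/2}\|\le r_a(\varphi)$ is an elementary order estimate, while the reverse inequality requires lifting the Jordan decomposition of $a^{1/2}\varphi a^{1/2}$ back to a decomposition of $\varphi$ itself. The key difficulty is that $a$ need not be invertible, so I will handle the general case by regularizing $a\mapsto a+\varepsilon\cdot 1$ and passing to the limit.

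\textbf{Upper bound.} Given any admissible decomposition $\varphi=\varphi_1-\varphi_2$ with $\varphi_1,\varphi_2\in\mathcal{M}_*^+$, both $a^{1/2}\varphi_i a^{1/2}$ are normal positive functionals and
$$a^{1/2}\varphi a^{1/2}=a^{1/2}\varphi_1 a^{1/2}-a^{1/2}\varphi_2 a^{1/2}.$$
Hence $\|a^{1/2}\varphi a^{1/2}\|\le(a^{1/2}\varphi_1 a^{1/2})(1)+(a^{1/2}\varphi_2 a^{1/2})(1)=\varphi_1(a)+\varphi_2(a)$, and taking the infimum over all such decompositions gives $\|a^{1/2}\varphi a^{1/2}\|\le r_a(\varphi)$.

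\textbf{Lower bound via regularization.} For the reverse inequality, note that if $a$ were invertible one could take the Jordan decomposition $a^{1/2}\varphi a^{1/2}=\psi_+-\psi_-$ and set $\varphi_\pm=a^{-1/2}\psi_\pm a^{-1/2}\in\mathcal{M}_*^+$; then $\varphi=\varphi_+-\varphi_-$ and $\varphi_+(a)+\varphi_-(a)=\psi_+(1)+\psi_-(1)=\|a^{1/2}\varphi a^{1/2}\|$. In the general case set $a_\varepsilon=a+\varepsilon\cdot 1$, which is invertible for $\varepsilon>0$. Let $\psi^\varepsilon:=a_\varepsilon^{1/2}\varphi a_\varepsilon^{1/2}\in\mathcal{M}_*^{\mathrm h}$, decompose $\psi^\varepsilon=\psi_+^\varepsilon-\psi_-^\varepsilon$ via Jordan, and define $\varphi_\pm^\varepsilon:=a_\varepsilon^{-1/2}\psi_\pm^\varepsilon a_\varepsilon^{-1/2}\in\mathcal{M}_*^+$. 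Then $\varphi_+^\varepsilon-\varphi_-^\varepsilon=a_\varepsilon^{-1/2}\psi^\varepsilon a_\varepsilon^{-1/2}=\varphi$, so this is a legal decomposition of $\varphi$.

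\textbf{Passage to the limit.} By functional calculus $a_\varepsilon^{-1/2}a\,a_\varepsilon^{-1/2}=a(a+\varepsilon)^{-1}\le 1$, so
$$\varphi_+^\varepsilon(a)+\varphi_-^\varepsilon(a)=(\psi_+^\varepsilon+\psi_-^\varepsilon)\bigl(a(a+\varepsilon)^{-1}\bigr)\le\psi_+^\varepsilon(1)+\psi_-^\varepsilon(1)=\|\psi^\varepsilon\|.$$
Since $a_\varepsilon^{1/2}\to a^{1/2}$ in norm and the bilinear action $(b,c,\rho)\mapsto b\rho c$ is jointly continuous on bounded sets into $\mathcal{M}_*$, we obtain $\|\psi^\varepsilon\|\to\|a^{1/2}\varphi a^{1/2}\|$, whence $r_a(\varphi)\le\|a^{1/2}\varphi a^{1/2}\|$. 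The main obstacle—that for non-invertible $a$ one cannot divide by $a^{1/2}$ to recover $\varphi$ from its sandwich—is precisely what the $\varepsilon$-regularization bypasses, at the cost of the final limit step.
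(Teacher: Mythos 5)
The paper does not actually prove this statement: it is imported as Proposition~3 of the cited work of Skvortsova and Tikhonov, so there is no internal argument to compare yours against. Your proof is correct and self-contained, and it follows the natural route. The upper bound is the expected triangle-inequality estimate, using that a positive normal functional has norm equal to its value at $\mathbf{1}$, so $\|a^{1/2}\varphi_i a^{1/2}\|=\varphi_i(a)$. For the lower bound, the key facts you rely on all hold: the Jordan decomposition of a hermitian normal functional $\psi$ satisfies $\|\psi\|=\psi_+(\mathbf{1})+\psi_-(\mathbf{1})$ with $\psi_\pm$ normal and positive; conjugation by $a_\varepsilon^{-1/2}\in\mathcal{M}$ preserves $\mathcal{M}_*^+$, so $\varphi_\pm^\varepsilon$ is an admissible decomposition of $\varphi$; and $a_\varepsilon^{-1/2}a\,a_\varepsilon^{-1/2}=a(a+\varepsilon\mathbf{1})^{-1}\le\mathbf{1}$ yields $r_a(\varphi)\le\|\psi^\varepsilon\|$ for every $\varepsilon>0$. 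The limit step is legitimate because $\|(a+\varepsilon\mathbf{1})^{1/2}-a^{1/2}\|\le\varepsilon^{1/2}\to 0$ together with $\|b\rho c\|\le\|b\|\,\|\rho\|\,\|c\|$ gives $\psi^\varepsilon\to a^{1/2}\varphi a^{1/2}$ in the norm of $\mathcal{M}_*$. The regularization is exactly the right device here, since for non-invertible $a$ one cannot conjugate the Jordan parts of $a^{1/2}\varphi a^{1/2}$ back by $a^{-1/2}$; the only cosmetic point is that $a(a+\varepsilon)^{-1}$ should read $a(a+\varepsilon\mathbf{1})^{-1}$.
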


\begin{corollary}
Let $a\in\mathcal{A}^+$ and $f\in \mathcal{A}^*_\mathrm{h}$, the the equality $$r_a(f)=\|a^\frac{1}{2}f a^\frac{1}{2}\|$$
holds.
\end{corollary}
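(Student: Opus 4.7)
The plan is to reduce the corollary to the theorem of Skvortsova--Tikhonov (the preceding Theorem) via the universal enveloping von Neumann algebra, using Lemma~\ref{prop3} to transport the seminorm $r_a$ to $\mathcal{N}$ and the isometry $\pi'$ to transport the norm back.

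First, I would invoke Lemma~\ref{prop3} to write $r_a(f)=r_{\pi(a)}(\pi'(f))$. Since $\pi(a)\in\mathcal{N}^+$ and $\pi'(f)\in\mathcal{N}_*^{\mathrm{h}}$, the preceding Theorem applied inside the von Neumann algebra $\mathcal{N}$ yields
$$r_{\pi(a)}(\pi'(f))=\bigl\|\pi(a)^{\frac{1}{2}}\,\pi'(f)\,\pi(a)^{\frac{1}{2}}\bigr\|.$$
So everything reduces to identifying this last norm with $\|a^{\frac{1}{2}}fa^{\frac{1}{2}}\|$.

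Next I would verify the naturality identity $\pi'(a^{\frac{1}{2}}fa^{\frac{1}{2}})=\pi(a)^{\frac{1}{2}}\pi'(f)\pi(a)^{\frac{1}{2}}$. Both sides are elements of $\mathcal{N}_*$, so it suffices to evaluate them on an arbitrary $y\in\mathcal{N}$: the left-hand side is $(a^{\frac{1}{2}}fa^{\frac{1}{2}})\bigl((\pi')^{-1}\text{-dual of }y\bigr)$ while the right-hand side is $\pi'(f)(\pi(a)^{\frac{1}{2}}y\pi(a)^{\frac{1}{2}})$; using $\pi(a^{\frac{1}{2}})=\pi(a)^{\frac{1}{2}}$ (because $\pi$ is a $*$-homomorphism and the continuous functional calculus commutes with such maps) together with the defining relation $\varphi(\pi(x))=((\pi')^{-1}(\varphi))(x)$ from the proof of Lemma~\ref{prop3}, the two expressions coincide, at least for $y\in\pi(\mathcal{A})$, and then for all $y\in\mathcal{N}$ by $\sigma$-weak density of $\pi(\mathcal{A})$ in $\mathcal{N}$ and normality of $\pi'(f)$.

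Finally, since $\pi'$ is an isometric isomorphism of $\mathcal{A}^*$ onto $\mathcal{N}_*$, the naturality identity gives
$$\bigl\|\pi(a)^{\frac{1}{2}}\pi'(f)\pi(a)^{\frac{1}{2}}\bigr\|=\bigl\|\pi'\!\bigl(a^{\frac{1}{2}}fa^{\frac{1}{2}}\bigr)\bigr\|=\bigl\|a^{\frac{1}{2}}fa^{\frac{1}{2}}\bigr\|,$$
and chaining the three equalities proves the corollary. The only step that is not purely formal is the naturality identity in the second paragraph; the main obstacle I expect is being careful about the density/normality argument there, since the action $f\mapsto a^{\frac{1}{2}}fa^{\frac{1}{2}}$ on the predual is defined through the $\mathcal{A}$-bimodule structure on $\mathcal{A}^*$ and one must check it is correctly intertwined by $\pi'$ with the $\mathcal{N}$-bimodule action on $\mathcal{N}_*$.
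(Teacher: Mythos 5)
Your argument is correct and is exactly the deduction the paper intends: the corollary is stated without proof immediately after Lemma~\ref{prop3} and the Skvortsova--Tikhonov theorem, and the intended route is precisely to transport $r_a$ to the enveloping von Neumann algebra via Lemma~\ref{prop3}, apply the theorem there, and pull the norm back through the isometry $\pi'$ using the intertwining identity $\pi'(a^{\frac{1}{2}}fa^{\frac{1}{2}})=\pi(a)^{\frac{1}{2}}\pi'(f)\pi(a)^{\frac{1}{2}}$. Your careful verification of that identity on $\pi(\mathcal{A})$ followed by the density/normality argument is sound and fills in the one step the paper leaves implicit.
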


\begin{remark}
If $\mathcal{A}$ is unital, then $$\|f\|=r_{\mathbf{1}}(f) \text{ for any } f\in \mathcal{A}^*_\mathrm{h}.$$
\end{remark}

\begin{theorem}[Theorem 1, \cite{Nov2017}]\label{faithfull_v_n_a}
Let $a\in \mathcal{M}^+$ then the the seminorm $r_a$ if faithfull (i.e. is a norm) on $\mathcal{M}_*^\mathrm{h}$ if and only if $\ker a=\{\overrightarrow{0}\}$, i.e. $a$ is injective. 
\end{theorem}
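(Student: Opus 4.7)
The strategy is to exploit the identity $r_a(\varphi)=\|a^{\frac12}\varphi a^{\frac12}\|$ supplied by the preceding theorem of Skvortsova--Tikhomirov, which reduces faithfulness of $r_a$ to injectivity of the map $\varphi\mapsto a^{\frac12}\varphi a^{\frac12}$ on $\mathcal{M}_*^{\mathrm h}$. Recalling that this functional acts as $(a^{\frac12}\varphi a^{\frac12})(x)=\varphi(a^{\frac12}xa^{\frac12})$, the vanishing $r_a(\varphi)=0$ is equivalent to $\varphi(a^{\frac12}xa^{\frac12})=0$ for every $x\in\mathcal{M}$.

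For the easy implication, suppose $\ker a\neq\{0\}$. Then the support projection $s(a)$ of $a$ is strictly below $\mathbf{1}$, so $p:=\mathbf{1}-s(a)\neq 0$ and $a^{\frac12}p=pa^{\frac12}=0$. Pick any nonzero normal hermitian functional $\psi$ on the nontrivial von Neumann algebra $p\mathcal{M}p$ (for example a vector state arising from a nonzero $\xi\in p\mathcal{H}$) and lift it to $\mathcal{M}$ via $\varphi(x):=\psi(pxp)$. Then $\varphi\in\mathcal{M}_*^{\mathrm h}$, $\varphi\neq 0$, yet $\varphi(a^{\frac12}xa^{\frac12})=\psi(0)=0$, whence $r_a(\varphi)=0$ and $r_a$ is not faithful.

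For the converse assume $a$ is injective, i.e.\ $s(a)=\mathbf{1}$, and suppose $r_a(\varphi)=0$. Invoking the spectral theorem, set $p_\varepsilon:=\chi_{[\varepsilon,\infty)}(a)\in\mathcal{M}$ and let $g_\varepsilon(a)\in\mathcal{M}$ be the bounded element obtained by functional calculus from $g_\varepsilon(\lambda)=\lambda^{-\frac12}\chi_{[\varepsilon,\infty)}(\lambda)$; by construction $a^{\frac12}g_\varepsilon(a)=g_\varepsilon(a)a^{\frac12}=p_\varepsilon$. For arbitrary $y\in\mathcal{M}$, plugging $x=g_\varepsilon(a)\,y\,g_\varepsilon(a)$ into the vanishing hypothesis yields
$$\varphi(p_\varepsilon y p_\varepsilon)=\varphi\bigl(a^{\frac12}g_\varepsilon(a)\,y\,g_\varepsilon(a)a^{\frac12}\bigr)=0.$$
Since $s(a)=\mathbf{1}$, the projections $p_\varepsilon$ increase to $\mathbf{1}$ strongly as $\varepsilon\downarrow 0$. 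Writing $p_\varepsilon yp_\varepsilon-y=(p_\varepsilon-\mathbf{1})yp_\varepsilon+y(p_\varepsilon-\mathbf{1})$ and using boundedness of $y$, we see $p_\varepsilon yp_\varepsilon\to y$ strongly with $\|p_\varepsilon yp_\varepsilon\|\leq\|y\|$. Normality of $\varphi$, combined with the fact that on norm-bounded sets the weak and $\sigma$-weak operator topologies coincide, then gives $\varphi(y)=\lim_{\varepsilon\downarrow 0}\varphi(p_\varepsilon yp_\varepsilon)=0$; since $y$ was arbitrary, $\varphi=0$.

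The step I expect to require the most care is the concluding passage to the limit: one must argue that strong convergence of the bounded net $p_\varepsilon y p_\varepsilon$ is strong enough to be detected by the a priori only $\sigma$-weakly continuous functional $\varphi$. This rests on the standard coincidence of the weak and $\sigma$-weak topologies on norm-bounded subsets of $\mathcal{M}$, rather than on any delicate structural fact about $\mathcal{M}$ itself.
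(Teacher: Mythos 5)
Your argument is correct: the reduction via the Skvortsova--Tikhonov identity $r_a(\varphi)=\|a^{\frac12}\varphi a^{\frac12}\|$ to injectivity of $\varphi\mapsto a^{\frac12}\varphi a^{\frac12}$, the construction of a nonzero normal functional supported under $\mathbf{1}-s(a)$ when $\ker a\neq\{0\}$, and the converse via $g_\varepsilon(a)=a^{-\frac12}\chi_{[\varepsilon,\infty)}(a)$ together with normality of $\varphi$ on the bounded net $p_\varepsilon yp_\varepsilon\to y$ are all sound. Note, however, that the paper itself gives no proof of this statement --- it is imported verbatim as Theorem~1 of \cite{Nov2017} --- so there is no in-paper argument to compare against; your proof is a legitimate, self-contained justification using only the identity stated earlier in the paper.
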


\begin{theorem}[Theorem 15, \cite{Nov2017}]\label{faithfull}
Let $a\in \mathcal{A}^+$ then the seminorm $r_a$ is faithfull (i.e. is a norm) on $\mathcal{A}^*_\mathrm{h}$ if and only if for any $f\in \mathcal{A}^*_+\setminus{\{\mathit{0}\}}$ the inequality $f(a)>0$ holds.
\end{theorem}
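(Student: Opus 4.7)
The strategy is to transfer the problem to the universal enveloping von Neumann algebra $\mathcal{N}$ via $\pi$ and $\pi'$, invoke the von Neumann algebra version (Theorem~\ref{faithfull_v_n_a}), and then translate the resulting kernel condition into the stated condition on positive functionals of $\mathcal{A}$.

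First, I would combine Lemma~\ref{prop3} with the fact that $\pi' \colon \mathcal{A}^* \to \mathcal{N}_*$ is an order-preserving isometric isomorphism restricting to a bijection $\mathcal{A}^*_\mathrm{h} \to \mathcal{N}_*^\mathrm{h}$. Since $r_a(f) = r_{\pi(a)}(\pi'(f))$, the seminorm $r_a$ is faithful on $\mathcal{A}^*_\mathrm{h}$ if and only if $r_{\pi(a)}$ is faithful on $\mathcal{N}_*^\mathrm{h}$; by Theorem~\ref{faithfull_v_n_a} this is equivalent to $\ker \pi(a) = \{\overrightarrow{0}\}$. Using the intertwining identity $\pi'(f)(\pi(a)) = f(a)$, the condition ``$f(a) > 0$ for every nonzero $f \in \mathcal{A}^*_+$'' translates into ``$\varphi(\pi(a)) > 0$ for every nonzero $\varphi \in \mathcal{N}_*^+$''. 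Thus everything reduces to proving
\[
\ker\pi(a) = \{\overrightarrow{0}\} \iff \varphi(\pi(a)) > 0 \text{ for all } \varphi \in \mathcal{N}_*^+ \setminus \{0\}.
\]

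For $(\Leftarrow)$ I argue by contraposition: if $\xi \neq \overrightarrow{0}$ lies in $\ker \pi(a)$, the vector functional $\omega_\xi(x) = \langle x\xi,\xi\rangle$ is a nonzero normal positive functional with $\omega_\xi(\pi(a)) = \|\pi(a)^{1/2}\xi\|^2 = 0$. For $(\Rightarrow)$, also by contraposition, suppose $\varphi \in \mathcal{N}_*^+$ is nonzero with $\varphi(\pi(a)) = 0$. I would then invoke the support projection $s(\varphi)$ and the standard implication $\varphi(x^*x) = 0 \Rightarrow x\,s(\varphi) = 0$ applied to $x = \pi(a)^{1/2}$; this gives $\pi(a)^{1/2}\,s(\varphi) = 0$, whence $\pi(a)\,s(\varphi) = 0$, so the nonzero range of $s(\varphi)$ sits inside $\ker \pi(a)$.

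The principal technical point is this invocation of the support projection of a normal positive functional together with the property $\varphi(x^*x) = 0 \Rightarrow x\,s(\varphi) = 0$; a parallel argument using the representation of normal positive functionals as countable sums of vector states works equally well. Everything else is bookkeeping through Lemma~\ref{prop3}, Theorem~\ref{faithfull_v_n_a}, and the isomorphism properties of $\pi'$.
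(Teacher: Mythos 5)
The paper does not actually prove this statement: it is imported verbatim as Theorem~15 of \cite{Nov2017}, so there is no in-text argument to compare yours against line by line. Judged on its own, your proof is correct and is the natural one given the toolkit this paper assembles: Lemma~\ref{prop3} together with the fact that $\pi'$ is an order-preserving isometric isomorphism of $\mathcal{A}^*$ onto $\mathcal{N}_*$ (hence carries $\mathcal{A}^*_\mathrm{h}$ onto $\mathcal{N}_*^\mathrm{h}$ and $\mathcal{A}^*_+\setminus\{0\}$ onto $\mathcal{N}_*^+\setminus\{0\}$) legitimately reduces everything to the equivalence $\ker\pi(a)=\{\overrightarrow{0}\}\iff\varphi(\pi(a))>0$ for all $\varphi\in\mathcal{N}_*^+\setminus\{0\}$, and both of your contraposition steps are sound. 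For $(\Leftarrow)$ the vector state $\omega_\xi$ works because $\ker\pi(a)=\ker\pi(a)^{1/2}$ for a positive operator (worth saying explicitly); for $(\Rightarrow)$ the support-projection fact $\varphi(x^*x)=0\Rightarrow x\,s(\varphi)=0$ applied to $x=\pi(a)^{1/2}$ does exactly what you claim, and your alternative via $\varphi=\sum_n\omega_{\xi_n}$ is equally valid and arguably more elementary. One small observation: the direction ``$r_a$ faithful $\Rightarrow f(a)>0$ for all nonzero $f\in\mathcal{A}^*_+$'' admits a one-line direct proof (take $f_1=f$, $f_2=0$ in the definition of $r_a$ to get $r_a(f)\leq f(a)=0$), so the von Neumann machinery is really only needed for the converse; but routing both directions through $\mathcal{N}$ as you do is clean and costs nothing.
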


If $r_a$ is faithfull, then we denote it as $\|\cdot\|_a$.

\section{Main results}

\begin{lemma}\label{topology_inclusion}
Let $a\in \mathcal{A}^+$ and for any $\alpha, \beta \in \mathbb{R}^+\setminus\{0\}$ such that $\alpha<\beta$, then the inequality 
$r_{a^\beta}(f)\leq \|a^{\beta-\alpha}\|r_{a^\alpha}(f)$ holds for any $f\in \mathcal{A}^*_\mathrm{h}$.
\end{lemma}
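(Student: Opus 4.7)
The plan is to reduce the inequality to a scalar inequality on the spectrum of $a$ via continuous functional calculus, and then push the resulting operator inequality through the definition of $r_a$ by evaluating on positive functionals.

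First I would establish the key operator inequality
$$a^\beta \leq \|a^{\beta-\alpha}\|\, a^\alpha.$$
Since $a \in \mathcal{A}^+$, the spectrum of $a$ is contained in $[0, \|a\|]$, and for every $t \in [0, \|a\|]$ one has $t^\beta = t^{\beta-\alpha} \, t^\alpha \leq \|a\|^{\beta-\alpha}\, t^\alpha$. Using the identity $\|a^s\| = \|a\|^s$ for all $s > 0$ (immediate from the same functional calculus), the constant $\|a\|^{\beta-\alpha}$ equals $\|a^{\beta-\alpha}\|$, and applying continuous functional calculus to the resulting scalar inequality on $\mathrm{spec}(a)$ gives the operator inequality above.

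Next I would fix $f \in \mathcal{A}^*_\mathrm{h}$ and an arbitrary decomposition $f = f_1 - f_2$ with $f_1, f_2 \in \mathcal{A}^*_+$. Applying the positive functionals $f_1, f_2$ to the operator inequality and summing yields
$$f_1(a^\beta) + f_2(a^\beta) \leq \|a^{\beta-\alpha}\|\, \bigl(f_1(a^\alpha) + f_2(a^\alpha)\bigr).$$
Since this holds for \emph{every} admissible decomposition, taking the infimum of both sides over such decompositions and invoking the definitions of $r_{a^\beta}$ and $r_{a^\alpha}$ gives $r_{a^\beta}(f) \leq \|a^{\beta-\alpha}\|\, r_{a^\alpha}(f)$, as required.

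I do not expect any serious obstacle here. The only mild subtlety is the handling of fractional powers $a^\alpha$, $a^\beta$, $a^{\beta-\alpha}$ for arbitrary positive reals, but since $a \geq 0$ these are unambiguously defined via continuous functional calculus. As an alternative route, one could use the corollary $r_a(f) = \|a^{1/2} f a^{1/2}\|$ together with the factorization $a^{\beta/2} = a^{(\beta-\alpha)/2} a^{\alpha/2}$ and the bimodule norm bound $\|b \varphi b\| \leq \|b\|^2 \|\varphi\|$, combined with $\|a^{(\beta-\alpha)/2}\|^2 = \|a^{\beta-\alpha}\|$; but the first approach is preferable because it follows directly from the defining formula for $r_a$ and avoids invoking the dual-bimodule structure.
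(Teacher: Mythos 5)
Your proof is correct, but it follows a genuinely different route from the paper. The paper's proof is a two-line computation based on the representation $r_a(f)=\|a^{1/2}fa^{1/2}\|$ (the Corollary following the cited Proposition~3 of Skvortsova--Tikhonov): it writes $a^{\beta/2}fa^{\beta/2}=a^{(\beta-\alpha)/2}\bigl(a^{\alpha/2}fa^{\alpha/2}\bigr)a^{(\beta-\alpha)/2}$ and applies the bimodule bound $\|b\varphi b\|\leq\|b\|^2\|\varphi\|$ together with $\|a^{(\beta-\alpha)/2}\|^2=\|a^{\beta-\alpha}\|$ --- exactly the "alternative route" you sketch and set aside at the end. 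Your primary argument instead works straight from the defining infimum: you prove the operator inequality $a^\beta\leq\|a^{\beta-\alpha}\|a^\alpha$ by functional calculus on $\mathrm{spec}(a)\subseteq[0,\|a\|]$, evaluate it on each positive functional of an arbitrary admissible decomposition $f=f_1-f_2$, and pass to the infimum. This is in effect the monotonicity principle "$0\leq b\leq Cc$ implies $r_b\leq Cr_c$", and it buys self-containedness: it needs nothing beyond Definition~1 and elementary spectral theory, whereas the paper's argument leans on an imported identity whose proof lives in the von Neumann algebra setting and is transported to $\mathcal{A}$ via the universal enveloping algebra. The paper's route, in exchange, is shorter and reuses machinery already needed elsewhere (the same $\|a^{1/2}fa^{1/2}\|$ formula drives Theorem~\ref{invertible_topology}). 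Both proofs are sound; the infimum-exchange step you perform is valid since the constant $\|a^{\beta-\alpha}\|$ is independent of the decomposition, and the degenerate case $a=0$ is trivially covered.
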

\begin{proof}
Let $f\in \mathcal{A}^*_\mathrm{h}$, then
 $$r_{a^\beta}(f)=\|a^\frac{\beta}{2}f a^\frac{\beta}{2}\|=\| a^\frac{\beta-\alpha}{2} a^\frac{\alpha}{2} f a^\frac{\alpha}{2} a^\frac{\beta-\alpha}{2}\|\leq$$
 $$\leq\|a^\frac{\beta-\alpha}{2}\|^2\|a^\frac{\alpha}{2} f a^\frac{\alpha}{2}\|=\|a^{\beta-\alpha}\|r_{a^\alpha}(f).\qedhere$$
\end{proof}

\begin{lemma}\label{semiinvertible_a_v_n}
Let $a\in\mathcal{M}^+$ then the following conditions are equivalent:

\begin{enumerate}[(i)]
    \item $\exists \alpha, \beta\in \mathbb{R}^+$ ($\alpha\neq\beta)$ such that $r_{a^\alpha}$ and $r_{a^\beta}$ are equivalent to each other;
    \item $\forall \alpha \in \mathbb{R}^+$  $r_{a^\alpha}$ is equivalent to $r_{\mathrm{rp}(a)}$, where $\mathrm{rp}(a)$ is range projection of operator $a$
\end{enumerate}
\end{lemma}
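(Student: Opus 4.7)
The plan is to dispose of (ii)$\Rightarrow$(i) trivially by transitivity of equivalence and to focus the real work on (i)$\Rightarrow$(ii). Without loss of generality assume $\alpha<\beta$. By Lemma~\ref{topology_inclusion} the upper estimate $r_{a^\beta}(f)\leq\|a^{\beta-\alpha}\|\,r_{a^\alpha}(f)$ is automatic, so the hypothesis amounts to producing a constant $C>0$ with $r_{a^\alpha}(f)\leq C\,r_{a^\beta}(f)$ for every $f\in\mathcal{M}_*^\mathrm{h}$.

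The first real step is to upgrade this functional-level inequality to an operator-level inequality. For any $b\in\mathcal{M}^+$ and any positive normal $\varphi\in\mathcal{M}_*^+$, direct inspection of the defining infimum (take $\varphi_2=0$; the bound $\varphi_1(b)+\varphi_2(b)\geq \varphi(b)$ for any other decomposition is immediate) gives $r_b(\varphi)=\varphi(b)$. Restricting the assumed inequality to $\varphi\in\mathcal{M}_*^+\subset\mathcal{M}_*^\mathrm{h}$ therefore yields $\varphi(a^\alpha)\leq C\,\varphi(a^\beta)$ for every normal positive functional, and because $\mathcal{M}_*^+$ separates $\mathcal{M}^+$ this lifts to the operator inequality $a^\alpha\leq C\,a^\beta$ inside $\mathcal{M}$.

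Next I would pass into the commutative von Neumann subalgebra generated by $a$ and apply the continuous functional calculus: the scalar inequality $C\lambda^\beta-\lambda^\alpha\geq 0$ must hold on $\sigma(a)$, which for $\lambda>0$ forces $\lambda\geq\delta:=C^{-1/(\beta-\alpha)}$. Thus $\sigma(a)\setminus\{0\}\subseteq[\delta,\|a\|]$, and for every $\gamma\in\mathbb{R}^+$ this spectral gap produces the sandwich $\delta^\gamma\,\mathrm{rp}(a)\leq a^\gamma\leq\|a\|^\gamma\,\mathrm{rp}(a)$. Finally I would invoke the fact that $b\mapsto r_b$ is monotone on $\mathcal{M}^+$, which is visible straight from the definition (if $0\leq b\leq c$ and $\varphi_i\geq0$ then $\varphi_i(b)\leq\varphi_i(c)$, so the infimum decreases in $b$). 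Applying monotonicity to the operator sandwich gives $\delta^\gamma\,r_{\mathrm{rp}(a)}(f)\leq r_{a^\gamma}(f)\leq\|a\|^\gamma\,r_{\mathrm{rp}(a)}(f)$, which is exactly (ii).

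The only genuinely non-routine move is the transition from the seminorm inequality to the operator inequality $a^\alpha\leq C\,a^\beta$; once that is in hand, confining attention to the commutative subalgebra generated by $a$ makes everything else a standard spectral-calculus computation.
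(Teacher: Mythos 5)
Your argument is correct, and its first half coincides with the paper's: both of you observe that $r_b(\varphi)=\varphi(b)$ for $\varphi\in\mathcal{M}_*^+$, restrict the assumed seminorm inequality to positive normal functionals, and use the fact that $\mathcal{M}_*^+$ determines the order on $\mathcal{M}^{\mathrm{sa}}$ to obtain the operator inequality $a^\alpha\leq C\,a^\beta$. Where you diverge is in extracting (ii) from that inequality. The paper multiplies both sides by $\left(\frac{1}{n}\mathbf{1}+a^{\alpha/2}\right)^{-1}$, passes to the strong limit to reach $p\leq C\,p\,a^{\beta-\alpha}p$ with $p=\mathrm{rp}(a)$, and then raises this to the power $\xi=\gamma/(\beta-\alpha)$ (using that $p$ commutes with $a$, so $(pa^{\beta-\alpha}p)^\xi=a^\gamma p$) to get $C^{\gamma/(\alpha-\beta)}p\leq pa^\gamma p$ and hence the two-sided seminorm estimate. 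You instead read the inequality $a^\alpha\leq C a^\beta$ inside the abelian von Neumann algebra generated by $a$, conclude the spectral gap $\sigma(a)\setminus\{0\}\subseteq[\delta,\|a\|]$ with $\delta=C^{-1/(\beta-\alpha)}$, and deduce the sandwich $\delta^\gamma\,\mathrm{rp}(a)\leq a^\gamma\leq\|a\|^\gamma\,\mathrm{rp}(a)$ directly, finishing with the (easily verified) monotonicity of $b\mapsto r_b$. The two routes yield the same constants, but yours avoids both the strong-limit step and the power-raising step; the latter is the more delicate point in the paper, since $t\mapsto t^\xi$ is operator monotone only for $\xi\leq 1$ and the paper's inequality $p^\xi\leq C^\xi(pa^{\beta-\alpha}p)^\xi$ for all $\xi>0$ really rests on the commutativity that your spectral-gap argument makes explicit. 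In short: same skeleton, but your functional-calculus treatment of the second half is cleaner and makes the commutativity that the paper uses tacitly into the visible engine of the proof.
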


\begin{proof}
 $(i)\implies (ii)$. Without loss of generality we consider that $\alpha < \beta$. 
 Since $$f(a^\alpha)=r_{a^\alpha}(f) \leq C r_{a^\beta}(f)=C f(a^\beta) \text{ for all } f\in \mathcal{A}^*_+$$ it follows that $a^\alpha \leq C a^\beta$. Therefore, for any natural $n$
 $$\left(\frac{1}{n}\mathbf{1}+a^\frac{\alpha}{2}\right)^{-1} a^\alpha \left(\frac{1}{n}\mathbf{1}+a^\frac{\alpha}{2}\right)^{-1}
 \leq $$
 $$\leq C \left(\frac{1}{n}\mathbf{1}+a^\frac{\alpha}{2}\right)^{-1} a^\beta \left(\frac{1}{n}\mathbf{1}+a^\frac{\alpha}{2}\right)^{-1}.$$
 Let $p=\mathrm{rp}(a)=s$-$\lim\limits_n a(\frac{1}{n}\mathbf{1}+a)^{-1}$, then $p\leq C p a^{\beta-\alpha} p$, thus
 $$p=p^\xi\leq C^\xi (p a^{\beta-\alpha} p)^\xi \text{ for each } \xi> 0.$$ Let $\gamma> 0$ and $\xi=\frac{\gamma}{\beta-\alpha}$, 
 then $C^{\frac{\gamma}{\alpha-\beta}}p\leq p a^\gamma p$.
 Therefore, $$C^{\frac{\gamma}{\alpha-\beta}}r_p\leq r_{pa^\gamma p}=r_{a^\gamma}\leq \|a^\gamma\|r_p.$$
 
 The implication $(ii)\implies (i)$ is evident.
\end{proof}

\begin{proposition}\label{two_conditions}
For $a\in \mathcal{A}^+$ the following conditions areequivalent: 
\begin{enumerate}[(i)]
    \item $\exists \alpha, \beta\in \mathbb{R}^+$ ($\alpha\neq\beta)$ such that $r_{a^\alpha}$ and $r_{a^\beta}$ are equivalent to each other;
    \item $\forall \alpha \in \mathbb{R}^+$  $r_{a^\alpha}$ is equivalent to $r_a$.
\end{enumerate}
\end{proposition}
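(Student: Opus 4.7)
The plan is to reduce everything to the von Neumann algebra situation (Lemma~\ref{semiinvertible_a_v_n}) by passing to the universal enveloping von Neumann algebra $\mathcal{N}$ via the canonical maps $\pi$ and $\pi'$. The implication $(ii)\Rightarrow(i)$ is trivial (take $\beta=1$ and any $\alpha\neq 1$), so the content lies in $(i)\Rightarrow(ii)$.

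First I would note that, since $\pi:\mathcal{A}\to\mathcal{N}$ is a $*$-homomorphism, it commutes with continuous functional calculus, so $\pi(a^\gamma)=\pi(a)^\gamma$ for every $\gamma\in\mathbb{R}^+$. Combined with Lemma~\ref{prop3}, this gives the identification
\begin{equation*}
r_{a^\gamma}(f)=r_{\pi(a)^\gamma}(\pi'(f))\qquad(f\in\mathcal{A}^*_\mathrm{h},\ \gamma>0).
\end{equation*}
Because $\pi':\mathcal{A}^*\to\mathcal{N}_*$ is an isometric order-preserving isomorphism, it restricts to a bijection between $\mathcal{A}^*_\mathrm{h}$ and $\mathcal{N}_*^\mathrm{h}$. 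Consequently, equivalence of $r_{a^\alpha}$ and $r_{a^\beta}$ on $\mathcal{A}^*_\mathrm{h}$ is the same as equivalence of $r_{\pi(a)^\alpha}$ and $r_{\pi(a)^\beta}$ on $\mathcal{N}_*^\mathrm{h}$.

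With this translation, hypothesis (i) for $a\in\mathcal{A}^+$ yields hypothesis (i) of Lemma~\ref{semiinvertible_a_v_n} for $\pi(a)\in\mathcal{N}^+$. The lemma then supplies, for every $\gamma>0$, the equivalence of $r_{\pi(a)^\gamma}$ with $r_{\mathrm{rp}(\pi(a))}$; in particular, by transitivity, $r_{\pi(a)^\gamma}$ is equivalent to $r_{\pi(a)}$ for every $\gamma>0$. Pulling these two-sided estimates back through $\pi'$ by means of the displayed identity gives $r_{a^\gamma}\sim r_a$ on $\mathcal{A}^*_\mathrm{h}$ for every $\gamma>0$, which is precisely (ii).

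The main point to keep track of is the bookkeeping through $\pi'$: the two-sided estimate on $\mathcal{A}^*_\mathrm{h}$ must lift to one valid on \emph{all} of $\mathcal{N}_*^\mathrm{h}$ before Lemma~\ref{semiinvertible_a_v_n} can be invoked, and the resulting conclusion must then be pushed back to $\mathcal{A}^*_\mathrm{h}$. Both steps rely on $\pi'|_{\mathcal{A}^*_\mathrm{h}}$ being a surjection onto $\mathcal{N}_*^\mathrm{h}$, which is immediate from $\pi'$ being a surjective isometric order-isomorphism. No genuinely new calculation beyond Lemma~\ref{semiinvertible_a_v_n} is needed.
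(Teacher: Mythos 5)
Your proposal is correct and follows essentially the same route as the paper: translate the hypothesis to the universal enveloping von Neumann algebra via Lemma~\ref{prop3}, invoke Lemma~\ref{semiinvertible_a_v_n} to get equivalence of every $r_{\pi(a)^\gamma}$ with $r_{\mathrm{rp}(\pi(a))}$ (hence with $r_{\pi(a)}$), and pull back. The only difference is that you make explicit the bookkeeping the paper leaves implicit (that $\pi$ commutes with functional calculus and that $\pi'$ is a surjective order-isometry), which is a welcome but inessential refinement.
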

\begin{proof}
 $(i)\implies (ii).$ By Lemma~\ref{prop3} the equivalence of 
$r_{a^\alpha}$ and $r_{a^\beta}$ implies the equivalence of
$r_{\pi(a)^\alpha}$ и $r_{\pi(a)^\beta}$. From Lemma~\ref{semiinvertible_a_v_n} it follows that $r_{\pi(a)^\gamma}$ is equivalent to $r_{\mathrm{rp}\pi(a)}$ for any $\gamma\in \mathbb{R}^+$,
including $\gamma=1$. Thus, $r_{\pi(a)^\alpha}$ is equivalent to $r_{\pi(a)}$ for each $\alpha\in \mathbb{R}^+$.
Again, by Lemma~\ref{prop3}, the latter implies that $r_{a^\alpha}$ is equivalent to $r_{a}$ for each $\alpha \in \mathbb{R}^+$.

 The implication $(ii)\implies (i)$ is evident.
\end{proof}

\begin{theorem}\label{invertible_topology}
Let $\mathcal{A}$ be unital C*-algebra $\mathcal{A}$ and $a\in \mathcal{A}^+$, then the following conditions are equivalent:

\begin{enumerate}[(i)]
    \item $a$ is invertible
    \item $\forall \alpha \in \mathbb{R}$ $\|\cdot\|_{a^\alpha}$ is equivalent to $\|\cdot\|$;
    \item $\forall \alpha \in \mathbb{R}^+$  $\|\cdot\|_{a^\alpha}$ is equivalent to $\|\cdot\|$;
    \item $\forall \varphi\in \mathcal{A}^*_+\setminus\{\mathit{0}\}\ (\varphi(a)>0)$ and there exist such  $\alpha$, $\beta\in \mathbb{R}^+ (\alpha\neq \beta)$ that $\|\cdot\|_{a^\alpha}$ is equivalent to $\|\cdot\|_{a^\beta}$.
\end{enumerate}
\end{theorem}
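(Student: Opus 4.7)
The plan is to prove the cycle (i) $\Rightarrow$ (ii) $\Rightarrow$ (iii) $\Rightarrow$ (iv) $\Rightarrow$ (i). The three shorter implications are essentially bookkeeping on top of the preceding lemmas, while the substantive content sits in (iv) $\Rightarrow$ (i).

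For (i) $\Rightarrow$ (ii), I would observe that invertibility of $a$ makes $a^\alpha$ positive and invertible for every real $\alpha$ via functional calculus, so that the Corollary gives $r_{a^\alpha}(f)=\|a^{\alpha/2}fa^{\alpha/2}\|$; sandwiching $f = a^{-\alpha/2}(a^{\alpha/2}fa^{\alpha/2})a^{-\alpha/2}$ produces the two-sided bound $\|a^{-\alpha}\|^{-1}\|f\|\le r_{a^\alpha}(f)\le\|a^\alpha\|\|f\|$, giving both faithfulness of $r_{a^\alpha}$ and equivalence with $\|\cdot\|$. The step (ii) $\Rightarrow$ (iii) is immediate. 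For (iii) $\Rightarrow$ (iv), transitivity of equivalence supplies the two distinct $\alpha,\beta\in\mathbb{R}^+$ with $\|\cdot\|_{a^\alpha}\sim\|\cdot\|_{a^\beta}$, and Theorem~\ref{faithfull} applied to $r_a$ (faithful because it is equivalent to $\|\cdot\|$) yields $\varphi(a)>0$ for every nonzero positive $\varphi$.

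The core work is in (iv) $\Rightarrow$ (i), which I would execute by lifting everything to the universal enveloping von Neumann algebra $\mathcal{N}$. Lemma~\ref{prop3} together with the isometric, order-preserving isomorphism $\pi'$ translates the two ingredients of (iv) as follows: the assumed equivalence $r_{a^\alpha}\sim r_{a^\beta}$ becomes $r_{\pi(a)^\alpha}\sim r_{\pi(a)^\beta}$ on $\mathcal{N}_*^\mathrm{h}$, and the first clause of (iv) (via Theorem~\ref{faithfull}) becomes faithfulness of $r_{\pi(a)}$, whence Theorem~\ref{faithfull_v_n_a} forces $\mathrm{rp}(\pi(a))=\mathbf{1}_\mathcal{N}$. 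Lemma~\ref{semiinvertible_a_v_n} then upgrades the equivalence to $r_{\pi(a)^\gamma}\sim r_{\mathbf{1}_\mathcal{N}}$ on $\mathcal{N}_*^\mathrm{h}$ for every $\gamma\in\mathbb{R}^+$; applying this at $\gamma=1$ and using the Remark, I obtain a constant $C>0$ with $\|\varphi\|\le C\,\|\pi(a)^{1/2}\varphi\pi(a)^{1/2}\|$ for all $\varphi\in\mathcal{N}_*^\mathrm{h}$.

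To close the proof I would test this inequality on vector states $\omega_\xi$ of the universal representation. Since $\omega_\xi$ is positive and normal, $\|\omega_\xi\|=\|\xi\|^2$ and $\pi(a)^{1/2}\omega_\xi\pi(a)^{1/2}=\omega_{\pi(a)^{1/2}\xi}$, so the estimate collapses to $\|\xi\|^2\le C\,\|\pi(a)^{1/2}\xi\|^2$ for every vector $\xi$ in the universal Hilbert space. Hence $\pi(a)^{1/2}$ is bounded below, its spectrum is bounded away from zero, and $\pi(a)$ is invertible in $\mathcal{N}$; spectral permanence for the unital C*-subalgebra inclusion $\pi(\mathcal{A})\subset\mathcal{N}$ finally descends invertibility to $a\in\mathcal{A}$. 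The main obstacle is precisely this last bridge: converting an abstract equivalence of seminorms on the predual into a pointwise lower bound on $\pi(a)^{1/2}$ as a Hilbert-space operator, and then carefully invoking spectral permanence so that the inverse lives in $\mathcal{A}$ rather than merely in $\mathcal{N}$.
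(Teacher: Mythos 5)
Your proof is correct, and while the easy implications (i)$\Rightarrow$(ii)$\Rightarrow$(iii)$\Rightarrow$(iv) match the paper's, your argument for the substantive implication (iv)$\Rightarrow$(i) takes a genuinely different route. The paper stays inside $\mathcal{A}$: it tests the norm equivalence on positive functionals to obtain $c\,a^\alpha\leq a^\beta\leq C\,a^\alpha$, multiplies by $\left(\frac{1}{n}\mathbf{1}+a^\alpha\right)^{-1}$ and ``passes to the limit'' to conclude $c\mathbf{1}\leq a^{\beta-\alpha}\leq C\mathbf{1}$. You instead lift to the universal enveloping von Neumann algebra via Lemma~\ref{prop3}, use the first clause of (iv) together with Theorems~\ref{faithfull} and~\ref{faithfull_v_n_a} to force $\mathrm{rp}(\pi(a))=\mathbf{1}$, invoke Lemma~\ref{semiinvertible_a_v_n} to get $r_{\pi(a)}\sim r_{\mathbf{1}}$, and then test on vector states to bound $\pi(a)^{1/2}$ below, finishing with spectral permanence. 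Your version is longer but has a real advantage: it makes explicit where the hypothesis $\varphi(a)>0$ for all nonzero $\varphi\in\mathcal{A}^*_+$ is actually used. That clause is indispensable --- a nontrivial projection $p\neq\mathbf{1}$ satisfies $r_{p^\alpha}=r_{p^\beta}$ for all $\alpha,\beta$ yet is not invertible, because $0$ is an isolated spectral point --- whereas the paper's displayed computation never visibly appeals to it: the limit of $\left(\frac{1}{n}\mathbf{1}+a^\alpha\right)^{-1}a^\alpha$ is the range projection, not $\mathbf{1}$, and upgrading it to $\mathbf{1}$ requires exactly the faithfulness argument you spell out. So your approach trades the paper's brevity for a complete accounting of the hypotheses; both are valid, but yours closes a step the paper leaves implicit.
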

\begin{proof}

$(i)\implies (ii).$ Let $\alpha\in \mathbb{R}$. If $a$ is invertible, then there exists $a^{-1}\in \mathcal{A}^+$ and, on one hand, 
$$\|f\|_{a^\alpha}=\|a^\frac{\alpha}{2}f a^\frac{\alpha}{2}\|\leq \|a^\alpha\|\|f\|\ (\text{for all } f\in \mathcal{A}^*_\mathrm{h}),$$
and on the other hand, $$\|f\|=\|a^\frac{-\alpha}{2}a^\frac{\alpha}{2}f a^\frac{\alpha}{2} a^\frac{-\alpha}{2}\|\leq \|a^{-\alpha}\|\|f\|_{a^\alpha}\ (\text{for all } f\in \mathcal{A}^*_\mathrm{h}).$$

The implications $(ii) \implies (iii)$, $(iii) \implies (iv)$ are evident.

$(iv) \implies (i)$. Since $\|\cdot\|_{a^\alpha}$ is equivalent to $\|\cdot\|_{a^\beta}$,
it follows that there exist $c$ and $C$ such that $$c\|f\|_{a^\alpha}\leq \|f\|_{a^\beta}\leq C\|f\|_{a^\alpha}\ (\text{for all } f\in \mathcal{A}^*_\mathrm{h} ).$$ Without loss of generality we assume, that $\alpha <\beta$. For arbitrary  $f\in \mathcal{A}^*_+$ 
the inequalities $$f(c a^\alpha)\leq f(a^\beta) \leq f(C a^\alpha)$$ holds, thus $$c a^\alpha \leq a^\beta \leq C a^\alpha,$$
hence for all natural $n\in \mathbb{N}$ $$c \left(\frac{1}{n}+a^\alpha\right)^{-1} a^\alpha \leq \left(\frac{1}{n}+a^\alpha\right)^{-1} a^\beta\leq C \left(\frac{1}{n}+a^\alpha\right)^{-1} a^\alpha.$$
We pass to the limit by $n$ and get $$c\mathbf{1}\leq a^{\beta-\alpha} \leq C\mathbf{1},$$ thus $a^{\beta-\alpha}$ is invertible,
as well as any $a^\gamma$ ($\gamma \in \mathbb{R}$).
\end{proof}

\begin{corollary}
Let $\mathcal{A}$ be unital C*-algebra $\mathcal{A}$ and $a\in \mathcal{A}^+$, then the following conditions are equivalent:

\begin{enumerate}[(i)]
    \item $a$ is invertible;
    \item $\|\cdot\|_a$ is equivalent to $\|\cdot\|;$
    \item $\forall \alpha\in \mathbb{R}^+$ $\|\cdot\|_{a^\alpha}$ is equivalent to $\|\cdot\|.$
\end{enumerate}
\end{corollary}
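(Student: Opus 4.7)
The plan is to reduce the corollary directly to Theorem~\ref{invertible_topology}. Condition (iii) of the corollary is literally condition (iii) of the theorem, so the equivalence $(i)\Leftrightarrow(iii)$ is already established. The implication $(iii)\Rightarrow(ii)$ is the trivial specialisation $\alpha=1$. Thus the only nontrivial direction left is $(ii)\Rightarrow(i)$.

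For $(ii)\Rightarrow(i)$, the main step is to simplify $r_a$ on positive functionals. If $f\in\mathcal{A}^*_+$, the decomposition $f_1=f,\ f_2=0$ gives the value $f(a)$; any other positive decomposition $f=f_1-f_2$ satisfies $f_1=f+f_2$, so $f_1(a)+f_2(a)=f(a)+2f_2(a)\geq f(a)$. Hence $\|f\|_a=f(a)$ for every $f\in\mathcal{A}^*_+$. By the Remark, one also has $\|f\|=f(\mathbf{1})$ on the positive cone.

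Combining these two identities with the assumed equivalence $c\|\cdot\|\leq\|\cdot\|_a\leq C\|\cdot\|$ yields, for every $f\in\mathcal{A}^*_+$, the scalar sandwich $c\,f(\mathbf{1})\leq f(a)\leq C\,f(\mathbf{1})$. Since positive functionals detect the order on $\mathcal{A}^\mathrm{sa}$, this upgrades to the operator inequality $c\mathbf{1}\leq a\leq C\mathbf{1}$, and the lower bound with $c>0$ makes $a$ invertible by the continuous functional calculus.

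The only subtlety is the identification $\|f\|_a=f(a)$ on $\mathcal{A}^*_+$; once that is in place, the implication is essentially a spectral argument. An alternative route would be to verify condition (iv) of Theorem~\ref{invertible_topology} with $\alpha=0,\ \beta=1$ (using $\|\cdot\|_{\mathbf{1}}=\|\cdot\|$ from the Remark), but the direct path above is shorter and sidesteps the question of whether $0\in\mathbb{R}^+$.
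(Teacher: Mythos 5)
Your proposal is correct. The paper itself leaves this corollary without a written proof, the intended route being an immediate application of Theorem~\ref{invertible_topology}: since the Remark gives $\|\cdot\|=r_{\mathbf{1}}=\|\cdot\|_{a^{0}}$, condition (ii) of the corollary is condition (iv) of the theorem with $\alpha=0$, $\beta=1$ (the positivity requirement $\varphi(a)>0$ being automatic because the notation $\|\cdot\|_a$ already presupposes faithfulness, cf.\ Theorem~\ref{faithfull}); note the paper's own convention appears to admit $0\in\mathbb{R}^+$, since it writes $\mathbb{R}^+\setminus\{0\}$ when it wants to exclude zero. Your direct argument for $(ii)\Rightarrow(i)$ is a clean, self-contained replay of the theorem's $(iv)\Rightarrow(i)$ computation in the one case where the resolvent regularization $\left(\frac{1}{n}\mathbf{1}+a^\alpha\right)^{-1}$ is unnecessary, because $a^0=\mathbf{1}$ already. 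Your identity $\|f\|_a=f(a)$ on $\mathcal{A}^*_+$ is correct and well argued from the definition (it also follows from the formula $r_a(f)=\|a^{\frac{1}{2}}fa^{\frac{1}{2}}\|$, since a positive functional has norm equal to its value at $\mathbf{1}$), and the passage from $c\,f(\mathbf{1})\leq f(a)\leq C\,f(\mathbf{1})$ for all $f\in\mathcal{A}^*_+$ to $c\mathbf{1}\leq a\leq C\mathbf{1}$ is the standard fact that positive functionals determine the order. So both routes land on the same spectral sandwich; yours buys independence from the bookkeeping about whether $0\in\mathbb{R}^+$, at the cost of redoing a computation the theorem already contains.
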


\section*{Acknowledgment}
Research is partially supported by Russian Foundation for Basic Research grant
18-31-00218 (мол\_а).


\begin{thebibliography}{99}

\bibitem{Takesakii} Takesaki, M. {\it Theory of Operator Algebras}, Vol.1-3 / M. Takesaki // Springer, Berlin, 2003, 1481 p.
\bibitem{SkvTik98} Скворцова Г.Ш. Выпуклые множества в некоммутативных $L_1$-пространствах, замкнутые в топологии локальной сходимости по мере / Г.Ш. Скворцова, О.Е. Тихонов // Изв. вузов. Матем. -- 1998. -- №8. -- c. 48--55.
\bibitem{Nov2017} Novikov A., $L_1$-space for a positive operator affiliated with von Neumann algebra / Positivity -- V.21, I.1 -- p. 359-375
\bibitem{Nov16} Novikov, An.An., Tikhonov O.E. Measures on orthoideals and $L_1$--spaces associated with positive operators / Lobachevskii Journal of Mathematics -- 2016 -- V.37 - No4 -- p. 497--499
\bibitem{NovEsk2017} Novikov A.A., Eskandarian Z., Inductive and projective limits of Banach spaces of measurable functions with order unities with respect to power parameter / Russian Mathematics, 2016, V. 60, I. 10, p. 67--71
\bibitem{Nov15} Novikov, An.An. Characterization of central elements of operator algebras by inequalities / An.An. Novikov, O.E. Tikhonov / Lobachevskii Journal of Mathematics -- 2015 -- V.36 -- No2 -- p. 208--210
\bibitem{Esk18} Eskandarian, Z. Locally Convex Limit Spaces of Measurable Functions with Order Units and Its Duals / Lobachevskii Journal of Mathematics -- 2018. -- V.39, I I.2 --  pp 195–199

\end{thebibliography}
\end{document}